\newcommand{\art}[6]{{\sc #1, \rm #2, \it #3 \bf #4 \rm (#5), \mbox{#6}.}}
\newcommand{\book}[3]{{\sc #1, \it #2, \rm #3.}}
\newcommand{\AND}{{\rm and }}
\newcommand{\loc}{_{\rm loc}}
\DeclareMathOperator*{\diam}{diam}
\DeclareMathOperator*{\dist}{dist}
\newcommand{\R}{\mathbb{R}}
\newcommand{\N}{\mathbb{N}}
\newcommand{\Z}{\mathbb{Z}}
\def\diam{\qopname\relax o{diam}}
\def\dist{\qopname\relax o{dist}}
\def\b{\qopname\relax o{b}}
\numberwithin{equation}{section}
\def\kint_#1{\mathchoice%
          {\mathop{\kern 0.2em\vrule width 0.6em height 0.69678ex depth -0.58065ex
                  \kern -0.8em \intop}\nolimits_{\kern -0.4em#1}}%
          {\mathop{\kern 0.1em\vrule width 0.5em height 0.69678ex depth -0.60387ex
                  \kern -0.6em \intop}\nolimits_{#1}}%
          {\mathop{\kern 0.1em\vrule width 0.5em height 0.69678ex depth -0.60387ex
                  \kern -0.6em \intop}\nolimits_{#1}}%
          {\mathop{\kern 0.1em\vrule width 0.5em height 0.69678ex depth -0.60387ex
                  \kern -0.6em \intop}\nolimits_{#1}}}
\theoremstyle{plain} 
\newtheorem{lemma}[equation]{Lemma} 
\newtheorem{proposition}[equation]{Proposition} 
\newtheorem{theorem}[equation]{Theorem} 
\newtheorem{corollary}[equation]{Corollary}
\theoremstyle{definition}
\newtheorem{definition}[equation]{Definition} 
\theoremstyle{remark}
\newtheorem{remark}[equation]{Remark}
\newtheorem*{ack}{Acknowledgment}
\numberwithin{equation}{section}
\title[Aspects of local to global results]{Aspects of local to global
  results}
\subjclass[2010]{42B35, 46E30, 46E35, 26D15}
\author{Ritva Hurri-Syrj\"anen}
\address[R.H.-S.]{University of Helsinki, Department of Mathematics
  and Statistics, P.O. Box 68 (\def\b{\qopname\relax o{b}}Gustaf H\"allstr\"ominkatu 2 $\b$),
  FI-00014 University of Helsinki, Finland}
\email{ritva.hurri-syrjanen@helsinki.fi} 
\author{Niko Marola}
\address[N.M.]{University of Helsinki, Department of Mathematics and
  Statistics, P.O. Box 68 (\def\b{\qopname\relax o{b}}Gustaf H\"allstr\"ominkatu 2 $\b$), FI-00014
  University of Helsinki, Finland} 
\email{niko.marola@helsinki.fi}
\author{Antti V. V\"ah\"akangas} \address[A.V.V.]{University of
  Helsinki, Department of Mathematics and Statistics, P.O. Box 68
  (\def\b{\qopname\relax o{b}}Gustaf H\"allstr\"ominkatu 2 $\b$), FI-00014 University of Helsinki,
  Finland} \email{antti.vahakangas@helsinki.fi} \thanks{A.V.V. benefited from
  the research program {\it Operator Related Function Theory and
  Time-Frequency Analysis} at the Centre for Advanced Study of the
  Norwegian Academy of Science and Letters in Oslo during 2012--2013.
  A.V.V. was  supported by the Finnish Academy of Science and
  Letters, Vilho, Yrj\"o and Kalle V\"ais\"al\"a Foundation}
\begin{document}

\begin{abstract} 
  We establish 
  local to global results for a function space which is larger than
  the well known BMO space, and was also introduced by John and
  Nirenberg.
  \end{abstract}

\maketitle
	
     
\keywords{}


\section{Introduction}
\label{sect:JN}

The space of functions of bounded
mean oscillation, abbreviated to
BMO, is introduced by John and Nirenberg \cite{JN}.
In the same paper, John and Nirenberg introduced a larger space of functions.
As opposed to any BMO function, that has exponentially
decaying distribution function, a function in this larger space  is
known to  belong to a weak $L^p$-space, \cite[Lemma 3]{JN}; the
inclusion being strict, see \cite[Example 3.5]{ABKY}.
We extend this weak-type inequality to the case of John domains.
The equivalence of local and global BMO norms is a rather well-known result, due to Reimann and Rychener~\cite{RR}.
We obtain the corresponding local to global result 
for the mentioned larger space of functions.

Let $G$ be a proper open subset of $\R^n$, $n\geq 1$.  The following
condition was introduced in \cite{JN}: Let $f\colon G \to \R$ be a
function in $L^1(G)$ and let us assume that there exists
$1<p<\infty$ such that
\begin{equation} \label{eq:JNnorm} \mathcal{K}_f^p(G) :=
  \sup_{\mathcal{P}(G)}\sum_{Q\in
    \mathcal{P}(G)}|Q|\left(\fint_Q|f(x)-f_Q|\, dx\right)^p <\infty\,,
\end{equation}
where the supremum is taken over all partitions $\mathcal{P}(G)$ of
$G$ into cubes such that $Q\subset G$ for each $Q\in\mathcal{P}(G)$,
the interiors of these cubes are pairwise disjoint, and
$G= \bigcup_{Q\in\mathcal{P}(G)}Q$. We call such partitions
admissible. 

It is shown in \cite[Lemma 3]{JN} that a function satisfying
\eqref{eq:JNnorm}, with $G$ being a cube $Q$ in $\R^n$, belongs to a
weak $L^p(Q)$-space. More precisely, there exists a positive constant
$C$, depending only on $n$ and $p$, so that for all $f\in L^1(Q)$,
\begin{equation} \label{eq:weaktype} \sigma^p\left|\{x\in Q:\
    |f(x)-f_{Q}|>\sigma\}\right| \leq C\mathcal{K}^p_f(Q)
\end{equation}
for each $\sigma>0$. We refer to \cite{Giusti, Tor, ABKY} for
other proofs of this result. 

We mention papers \cite{FPW, FPW03, MacP, MacP2} where a related
discrete summability condition is studied, and a recent paper
\cite{BM} where its relation to condition \eqref{eq:JNnorm} is
discussed. In \cite{FPW}, in particular, the authors prove a local to
global result in connection with this discrete summability
condition. However, the approach considered in the present paper is
different from the one in \cite{FPW} and of independent interest.

Let us localize condition \eqref{eq:JNnorm} in the following
way. For a function $f\in L\loc^1(G)$, we define the number
\begin{equation} \label{eq:JNloc} \mathcal{K}_{f,\textrm{loc}}^p(G) :=
  \sup_{\mathcal{P}\loc(G)}\sum_{Q\in\mathcal{P}\loc(G)}|Q|\left(\fint_Q|f(x)-f_Q|\,
    dx\right)^p,
\end{equation}
where the supremum is taken over all partitions
$\mathcal{P}\loc(G)$ of $G$ into cubes such that for each $Q\in\mathcal{P}\loc(G)$ 
 a dilated cube $\lambda Q\subset G$, with fixed $\lambda>1$, and
 these cubes have bounded overlap, specifically,
\[\sup_{x\in G} \sum_{Q\in\mathcal{P}\loc(G)}\chi_Q(x) \leq N\,,
\]
where $N\ge 1$ is a finite constant depending on $n$ only.
We call such
partitions local.

We shall prove a Reimann--Rychener-type local to global result. 
More precisely, in
Theorem~\ref{t.local-to-global},
we show that there exists a positive
constant $C$, depending on $n$, $p$, and $\lambda$, such that  for all $f\in L^1(G)$
\[
\mathcal{K}_f^p(G) \leq  C\mathcal{K}_{f,\textup{loc}}^p(G)\,.
\]
In the second part of the paper, we consider necessary and sufficient
conditions for Euclidean domains to support the weak-type inequality \eqref{eq:weaktype}. 
Our
main results are stated in Theorem~\ref{t.weak_john} and
Theorem~\ref{t.necessity}.

\begin{ack}
  The authors would like to thank Juha Kinnunen for  valuable
  discussions on the subject and for pointing out the reference
  \cite{BM}. The authors would also like to thank Juha Lehrb\"ack for
  pointing out \cite[Lemma 6]{HK} to us. 
\end{ack}

\section{Notation and preliminaries}
\label{s.notation}

Throughout the paper, a cube $Q$ in $\R^n$ is a closed cube with sides
parallel to the coordinate axes. For a cube $Q$, with side length
$\ell(Q)$, and for $\lambda>0$, we write the dilated cube, with side
length $\lambda \ell(Q)$, as $\lambda Q$. We write $\chi_A$ for the
characteristic function of a set $A$, the boundary of $A$ is written
as $\partial A$, and $\lvert A\rvert$ is the Lebesgue $n$-measure of a
measurable set $A$ in $\R^n$.
The integral average of $f\in L\loc^1(\R^n)$ over a bounded set $A$ with positive measure is written as
$f_A$, that is,
\[
f_A = \fint_Af\, dx = \frac1{\rvert A \lvert}\int_A f\, dx\,.
\] 
Various constants whose value may change even within
a given line are denoted by $C$. 

The family of closed dyadic cubes is written as $\mathcal{D}$. We let
$\mathcal{D}_j$ be the family of those dyadic cubes whose side length
is $2^{-j}$, $j\in\Z$. For a proper open set $G$ we fix its
Whitney decomposition $\mathcal{W}(G)\subset\mathcal{D}$, and write
$\mathcal{W}_j(G)=\mathcal{D}_j\cap \mathcal{W}(G)$. 
For a Whitney
cube $Q\in\mathcal{W}(G)$ we write $Q^*=\frac{9}{8} Q$. Such dilated cubes 
have a bounded overlap, with upper bound depending on $n$ only, and
they satisfy
\begin{equation}\label{dist_est}
  \frac{3}{4}\diam(Q)\le \dist(x,\partial G)\le 6\diam(Q),
\end{equation}
whenever $x\in Q^*$. 
For other properties of Whitney cubes we refer to \cite[VI.1]{Stein}.

For a bounded domain $G$ in $\R^n$, we will construct a {\em chain} of cubes
\[
\mathcal{C}(Q)= (Q_0,\ldots, Q_k)\subset \mathcal{W}(G)\,,
\]
joining $Q_0$ and $Q=Q_k$, such that 
 $Q_i\not=Q_j$ whenever $i\not=j$,
 and there exists a positive finite constant
$C=C(n)$ for which 
\begin{equation}\label{p.2}
\lvert Q_j^* \cap Q_{j-1}^*\rvert \ge C\max\{\lvert Q_j^*\rvert, \lvert Q_{j-1}^*\rvert\}
\end{equation}
with each $j\in \{1,\ldots,k\}$. A given family
$\{\mathcal{C}(Q):\ Q\in\mathcal{W}(G)\}$ with a fixed Whitney cube $Q_0$
is a {\em chain decomposition} of $G$.  
A  {\em shadow} of a Whitney cube $R\in\mathcal{W}(G)$ is the set
\[
\mathcal{S}(R) = \{Q\in\mathcal{W}(G):\ R\in\mathcal{C}(Q)\}\,.
\]
%
%

Let us recall the definition of John domains.  The condition in
Definition \ref{sjohn} was first used by John in \cite{J}; the
connection of this condition and the theory of Poincar\'e and Sobolev
type estimates was apparently first introduced by Boman in his
unpublished paper \cite{Boman}.

\begin{definition}\label{sjohn}
  A bounded domain $G$ in $\R^n$, $n\ge 2$, is a {\em John domain}, if
  there exist a point $x_0\in G$ and a constant $\beta_G\ge 1$ such that every
  point $x$ in $G$ can be joined to $x_0$ by a rectifiable curve
  $\gamma:[0,\ell]\to G$ parametrized by its arc length for which
  $\gamma(0)=x$, $\gamma(\ell)=x_0$, $\ell \le \beta_G \diam(G)$, and for all $t\in [0,\ell]$,
\[
\dist(\gamma(t),\partial G)\ge t/\beta_G\,.
\]
The point $x_0$ is called a {\em John center}
of $G$, and the smallest constant $\beta_G\ge 1$ is called the {\em
  John constant} of $G$.
\end{definition}

Bounded Lipschitz domains and bounded domains with the interior cone condition
are John domains. Also, the Koch snowflake is a John domain in the plane.
Observe that the John constant is invariant under scaling 
and translation of $G$.

The following observation concerning a given John domain $G$ will be
relevant to us.  There exist a positive number $s=s(n,\beta_G)<n$ and
a constant $C=C(n,\beta_G)>0$, such that
\begin{equation}\label{e.assouad}
\int_{B(y,r)} \mathrm{dist}(x,\partial G)^{s-n}\,dx\le C r^s
\end{equation}
for every $y\in \partial G$ and for every $r>0$.  Inequality \eqref{e.assouad}
is essentially covered by
\cite[Lemma 6]{HK}, but
it is also an immediate consequence of the
following three facts:
\begin{itemize}

\item[(1)] the boundary $\partial G$ of a John domain is porous in $\R^n$; 

\item[(2)] the Assouad dimension of a porous set in $\R^n$ is
  strictly less than $n$, \cite{Luukkainen}; 

\item[(3)] the Assouad dimension of $\partial G$ coincides with the
  Aikawa dimension of $\partial G$; we refer to a recent paper \cite{lehrbackII}.  

\end{itemize}
Indeed, by (1)--(3), the Aikawa dimension of $\partial G$ is strictly
less than $n$, and inequality \eqref{e.assouad} follows. The fact that both
$s$ and $C$ can be chosen, depending on $n$ and $\beta_G$ only, is
straightforward but tedious to verify. We omit the details. 

The following proposition provides a chain decomposition of a given
John domain. From now on, any reference to a chain decomposition will
be to the one presented in Proposition~\ref{p.admissible}.

\begin{proposition}(Chain decomposition)\label{p.admissible} Suppose
$1<p<\infty$ and ${G}$ is a John domain in $\R^n$.  Then
  there exist constants $\sigma,\tau\in\N$ and a chain
  decomposition $\{\mathcal{C}(Q):\ Q\in\mathcal{W}(G)\}$ of $G$ with
  the following conditions (1)--(3):
\begin{itemize}
\item[(1)] $\ell(Q)\le 2^{\tau}\ell(R)$ for each $R\in\mathcal{C}(Q)$
  and $Q\in\mathcal{W}(G)$;
\item[(2)] $\sharp \{R\in\mathcal{W}_j(G):\ R\in\mathcal{C}(Q)\}\le
 2^ \tau$ for each $Q\in \mathcal{W}(G)$ and $j\in \Z$;
\item[(3)] The following inequality holds,
\begin{equation}\label{e.useful}
\sup_{j\in \Z}\sup_{R\in\mathcal{W}_j(G)}
\frac{1}{\lvert R\lvert}
\sum_{k=j-\tau}^\infty \sum_{\substack{Q\in \mathcal{W}_k(G) \\ Q\in\mathcal{S}(R) }} \lvert Q\rvert
(\tau+1+k-j)^{p}  < \sigma\,.
\end{equation}
\end{itemize}
Furthermore, the constants $\sigma$ and $\tau$ depend only
on $n$, $p$, and the John constant $\beta_G$.
\end{proposition}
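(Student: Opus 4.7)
The plan is to construct each chain $\mathcal{C}(Q)$ from a John curve and verify the three conditions; the bulk of the work is condition (3), where the Aikawa dimension estimate \eqref{e.assouad} enters. Fix $Q_0 \in \mathcal{W}(G)$ containing the John center $x_0$, and for each $Q \in \mathcal{W}(G)$ let $\gamma_Q \colon [0,\ell_Q] \to G$ be a John curve with $\gamma_Q(0) = x_Q$ (the center of $Q$) and $\gamma_Q(\ell_Q) = x_0$. Define $\mathcal{C}(Q)$ by listing, in order, the successive distinct Whitney cubes that $\gamma_Q$ enters. Consecutive cubes are then adjacent Whitney cubes, so a direct computation shows that $|Q_{j-1}^* \cap Q_j^*|$ is comparable to $\max\{|Q_{j-1}^*|, |Q_j^*|\}$, giving \eqref{p.2}.

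For (1), combine $\dist(\gamma_Q(t), \partial G) \geq t/\beta_G$ with the trivial inequality $\dist(\gamma_Q(t), \partial G) \geq \dist(x_Q, \partial G) - t$; optimizing in $t$ yields $\dist(\gamma_Q(t), \partial G) \geq c\ell(Q)/\beta_G$ uniformly, which by \eqref{dist_est} forces $\ell(R) \geq c'\ell(Q)/\beta_G$ for every $R \in \mathcal{C}(Q)$. For (2), note that a Whitney cube of scale $j$ met by $\gamma_Q$ corresponds to a parameter $t \leq C\beta_G 2^{-j}$, so all such cubes lie in $\gamma_Q([0, C\beta_G 2^{-j}])$, a set of diameter at most $C\beta_G 2^{-j}$, and hence at most $C(n,\beta_G)$ of them appear. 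Choosing $\tau$ large enough in terms of $n$ and $\beta_G$ absorbs both bounds.

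The main point is (3). If $Q \in \mathcal{S}(R)$, then $\gamma_Q$ meets $R$ at some parameter $t_R \leq \beta_G \cdot 6 \diam(R) \leq C\ell(R)$, so the arc $\gamma_Q([0, t_R])$ has diameter at most $C\ell(R)$; combined with (1) and \eqref{dist_est} this gives the geometric localization
\[
\bigcup_{Q \in \mathcal{S}(R)} Q \subset B(y_R, C\ell(R)),
\]
where $y_R \in \partial G$ is a closest boundary point to the center of $R$. Since $\dist(x, \partial G)$ is comparable to $\ell(Q)$ on $Q \in \mathcal{W}(G)$, we have $\ell(Q)^s \leq C \int_Q \dist(x, \partial G)^{s-n}\,dx$; bounded overlap of the Whitney cubes together with \eqref{e.assouad} then yields
\[
\sum_{Q \in \mathcal{S}(R)} \ell(Q)^s \leq C \int_{B(y_R, C\ell(R))} \dist(x, \partial G)^{s-n}\,dx \leq C \ell(R)^s.
\]
For $Q \in \mathcal{W}_k(G) \cap \mathcal{S}(R)$ with $k \geq j - \tau$ write
\[
|Q|(\tau + 1 + k - j)^p = \ell(Q)^s\, \ell(R)^{n-s} \cdot \bigl[2^{-(k-j)(n-s)}(\tau+1+k-j)^p\bigr],
\]
and observe that the bracketed factor is bounded uniformly in $k \geq j - \tau$ since $n - s > 0$ makes exponential decay dominate polynomial growth. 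Summing over $\mathcal{S}(R)$ and dividing by $|R| = \ell(R)^n$ yields \eqref{e.useful} with $\sigma$ depending only on $n$, $p$, and $\beta_G$.

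The main obstacle is coupling the combinatorial condition (3) to the metric geometry of $\partial G$. The key move is to localize $\mathcal{S}(R)$ inside a single Euclidean ball via the John curve, after which \eqref{e.assouad} delivers the desired summability; the verifications of (1), (2), and \eqref{p.2} are then routine Whitney/John-curve bookkeeping.
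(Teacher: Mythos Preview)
Your argument is correct and follows essentially the same route as the paper: build chains along John curves, verify (1) and (2) from the carrot condition together with \eqref{dist_est}, then obtain (3) by localizing the shadow $\mathcal{S}(R)$ in a ball about a nearest boundary point and invoking the Aikawa-type estimate \eqref{e.assouad}; your factorization of $|Q|(\tau+1+k-j)^p$ is exactly the paper's inequality \eqref{i.test} in different notation. The one technical point to tidy is the chain construction: a raw John curve may revisit Whitney cubes, so ``listing the successive distinct cubes'' can either contain duplicates (violating $Q_i\neq Q_j$) or, if you strip repeats, break adjacency and hence \eqref{p.2}; the paper circumvents this by rebuilding a curve through cube midpoints that never returns near an earlier cube, but you can equally well short-circuit any loop $R_i=R_j$ by deleting $R_{i+1},\ldots,R_j$, which preserves adjacency and leaves all of your estimates intact.
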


\begin{proof}
Let us first construct a chain decomposition
of $G$. 
We fix a Whitney cube $Q_0$ containing the 
John
center $x_0$ of $G$.
Let $Q\in\mathcal{W}(G)$ and let us fix a rectifiable curve $\gamma$
that is parametrized by its arc length and joins the midpoint $x_Q$ of
$Q$ and $x_{0}$ as in Definition \ref{sjohn}. 

 First assume that $Q\cap Q_0\not=\emptyset$.
Then, we join  $x_Q$ to the  midpoint $x_{Q_0}$ of $Q_0$ by an arc that is contained in
$Q\cup Q_0$ and whose length is comparable to $\ell(Q)$.  Otherwise
there is $r>0$ such that $\gamma(r)$ lies in the boundary of a Whitney
cube $P$ that intersects $Q$ and $\gamma(t)$ belongs to a cube that is
not intersecting $Q$ whenever $t\in (r,\ell(\gamma)]$.  Join
 $x_Q$ to  $x_P$ by an arc whose length is
comparable to $\ell(Q)$ and is in $Q\cup P$.  We iterate these steps
with $Q$ replaced by $P$, and we continue until we reach $x_{Q_0}$.
Let $\gamma_Q$ be this composed curve parametrized by its arc length.

It is straightforward to verify that there
is a constant $\rho\ge 1$, depending on $n$ and $\beta_G$, such that
for every $t\in[0,\ell(\gamma_Q)]$,
\begin{equation}\label{esg}
  \dist(\gamma_Q(t),\partial G) \geq t/\rho\,.
\end{equation}
Let $\mathcal{C}(Q)$
be the chain consisting of cubes $R\in \mathcal{W}(G)$ such that the
midpoint $x_R= \gamma_Q(t_R)$ for some $t_R\in[0,\ell(\gamma_Q)]$.  

We  verify that this chain decomposition of $G$ satisfies
conditions (1)--(3).

{\bf{Condition (1):}} Let $Q\in\mathcal{W}(G)$ and $R\in\mathcal{C}(Q)$. Clearly,
we may assume that $R\not=Q$. Hence, if $\gamma_Q(t_R)=x_R$, then by
inequalities \eqref{esg} and \eqref{dist_est}, 
\[
\ell(Q)/2 \le t_R \le \rho\dist(\gamma_Q(t_R), \partial G) =
\rho\dist(x_R, \partial G)\le 6 \rho\sqrt n \,\ell(R)\,.
\]

{\bf{Condition (2)}:} Let $Q\in\mathcal{W}(G)$ and $j\in \Z$. 
Let $R_1,\ldots,R_M\in\mathcal{W}_{j}(G)$ be cubes such that $R_i\in
\mathcal{C}(Q)$ for every $i\in \{1,\ldots,M\}$.  We number these
cubes in the same order as $\gamma_Q$ hits their midpoints. In
particular, if $\gamma_Q(t)=x_{R_M}$, then $\gamma_Q([0,t])$ joins the
midpoints of $M$ cubes whose side length is $2^{-j}$. By \eqref{esg}
and \eqref{dist_est},
\[
(M-1)2^{-j}\le t \le \rho \dist(\gamma_Q(t),\partial G) =  \rho\dist(x_{R_M},\partial G)
\le 6\rho \sqrt n\,2^{-j}.
\]
It follows that $M\le 6\rho \sqrt n+1$, hence we obtain condition (2).

Let us fix $\tau=\tau(n,\beta_G)\in\N$ for which both 
conditions (1) and (2) are valid.

{\bf{Condition (3):}} Let us first prove that there is a constant
$C=C(n,\beta_G)>0$ such that, for each $R\in\mathcal{W}(G)$,
\begin{equation}\label{e.standard}
\bigcup_{Q\in \mathcal{S}(R)} Q \subset B(y_R, C\ell(R)),
\end{equation}
where $y_R\in\partial G$ is any point satisfying $\lvert x_R-y_R\rvert
=\dist(x_R,\partial G)$. Consider any cube $Q\in\mathcal{S}(R)$.
Since $R\in\mathcal{C}(Q)$, there is $t_R\in [0,\ell (\gamma_Q)]$ such
that $x_R=\gamma_Q(t_R)$. Hence, if $x\in Q$,
\begin{align*}
\lvert x-y_R\rvert  \le \lvert x-x_Q\rvert + \lvert x_Q-x_R\rvert + \lvert x_R-y_R\rvert\,.
\end{align*}
Observe that $\lvert x-x_Q\rvert \le \diam(Q)\le 2^\tau \diam(R)$ and
$\lvert x_R-y_R\rvert \le 6\diam(R)$. By  inequality~\eqref{esg},
\begin{align*}
\lvert x_Q-x_R\rvert  &= \lvert \gamma_Q(0)-\gamma_Q(t_R)\rvert
\le t_R \le \rho\dist(\gamma_Q(t_R),\partial G) \le 6\rho\diam(R)\,.
\end{align*}
Relation \eqref{e.standard} follows from the previous estimates.

Let $\epsilon=n-s>0$, where $s=s(n,\beta_G)$ is given by
\eqref{e.assouad}; recall that
$s$ is  related to  the Aikawa dimension of $\partial G$.  Fix $j\in \Z$ and $R\in\mathcal{W}_j(G)$. Then, if
$k\ge j-\tau$ and $Q\in\mathcal{W}_k(G)$,
\begin{equation} \label{i.test}
  \left(\frac{\ell (Q)}{\ell (R)}\right)^{\epsilon} (\tau + 1 + k - j)^p = 2^{(\tau +1)\epsilon}
  2^{-(\tau + 1+k-j)\epsilon} (\tau + 1 + k-j)^p \le C2^{\tau \epsilon},
\end{equation}
where $C=C(\epsilon,p)>0$. By inequality~\eqref{i.test},
\begin{align*}
  \sum_{k=j-\tau}^\infty \sum_{\substack{Q\in \mathcal{W}_k(G)
      \\ Q\in\mathcal{S}(R) }}
  \left(\frac{\ell(Q)}{\ell(R)}\right)^{n} (\tau+1+k-j)^{p} \le
  C2^{\tau \epsilon}
  \ell(R)^{-(n-\epsilon)}\sum_{Q\in\mathcal{S}(R)}
  \ell(Q)^{n-\epsilon}.
\end{align*}
On the other hand, by \eqref{dist_est}, \eqref{e.standard}, and \eqref{e.assouad}, we may
conclude that
\begin{equation*}
  \sum_{Q\in\mathcal{S}(R)} \ell(Q)^{n-\epsilon} 
%
%
\le C\int_{B(y_R,C\ell(R))} \dist(x,\partial
G)^{s-n}\,dx \leq C\ell(R)^{n-\epsilon},
\end{equation*}
where $C=C(n,\epsilon,\beta_G)>0$, and condition (3) follows.
\end{proof}

\section{A local to global result}
\label{sect:LocGlo}

In this section, we prove the following Reimann--Rychener-type local to global result.

\begin{theorem}\label{t.local-to-global}
  Suppose $G$ is a proper open subset of $\R^n$, $n\ge 2$. If $f\in
  L^1(G)$ and $1<p<\infty$, then
\begin{equation}\label{e.eqs}
\mathcal{K}_{f}^p(G)
\leq  C\mathcal{K}^p_{f,\textup{loc}}(G)\,,
\end{equation}
where a positive constant $C$ depends on $n$, $p$, and $\lambda$.
\end{theorem}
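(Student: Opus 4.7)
The strategy is to argue cube-by-cube along any given admissible partition. Fix an admissible partition $\mathcal{P}(G)$ and observe that each $Q \in \mathcal{P}(G)$, although possibly touching $\partial G$, is itself a cube and hence a John domain with John constant depending only on $n$. Applying Proposition~\ref{p.admissible} to $Q$ (playing the role of $G$ in that proposition) produces a Whitney decomposition $\mathcal{W}(Q)$, a central Whitney cube $W_0 = W_0(Q)$ of size comparable to $Q$, and a chain decomposition $\{\mathcal{C}(W) : W \in \mathcal{W}(Q)\}$ with structural constants $\sigma,\tau$ depending only on $n$ and $p$.

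The heart of the argument is the per-cube estimate
\[
|Q|\bigl(\fint_Q |f - f_Q|\,dx\bigr)^p \leq C \sum_{W\in \mathcal{W}(Q)} |W|\bigl(\fint_{W^*}|f - f_{W^*}|\,dx\bigr)^p,
\]
for each $Q \in \mathcal{P}(G)$, with $C = C(n,p)$. Replacing $f_Q$ by $f_{W_0}$ costs only a factor of $2$ in the average, and splitting $Q$ into its Whitney cubes followed by Hölder's inequality gives
\[
|Q|\bigl(\fint_Q|f-f_Q|\bigr)^p \leq C \sum_W |W|\,a_W^p + C \sum_W |W|\,|f_W - f_{W_0}|^p,
\]
where $a_W = \fint_W |f - f_W|$. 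The first sum is already of the desired local form. For the second, the overlap condition \eqref{p.2} yields $|f_R - f_{R'}| \leq C(a_R^* + a_{R'}^*)$ for consecutive cubes in $\mathcal{C}(W)$ (where $a_R^* = \fint_{R^*}|f - f_{R^*}|$), so telescoping along the chain produces $|f_W - f_{W_0}| \leq C\sum_{R\in \mathcal{C}(W)} a_R^*$. I would then apply a weighted Hölder inequality with weights $(j_W - j_R + \tau + 1)^p$ on the scales; Proposition~\ref{p.admissible}(2), together with $p' > 1$, ensures that $\sum_{R\in\mathcal{C}(W)} (j_W - j_R + \tau + 1)^{-p'}$ is bounded uniformly in $W$, which yields
\[
|f_W - f_{W_0}|^p \leq C \sum_{R \in \mathcal{C}(W)} (a_R^*)^p(j_W - j_R + \tau + 1)^p.
\]
Multiplying by $|W|$, summing over $W$, and interchanging the order of summation, the inner sum reorganises into $\sum_R (a_R^*)^p \sum_{W \in \mathcal{S}(R)} |W|(j_W - j_R + \tau + 1)^p$, which is majorised by $\sigma\sum_R |R|(a_R^*)^p$ precisely by condition \eqref{e.useful} applied to the John domain $Q$. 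Combining this with the trivial bound $a_W \leq C a_W^*$ completes the per-cube estimate.

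Summing the per-cube estimate over $Q \in \mathcal{P}(G)$ and observing that $\bigcup_{Q \in \mathcal{P}(G)} \{R^* : R \in \mathcal{W}(Q)\}$ — possibly after a bounded dyadic refinement to accommodate the prescribed dilation $\lambda > 1$ — is a bounded-overlap cover of $G$ by cubes whose $\lambda$-dilates lie in $G$, the right-hand side is dominated by $C\,\mathcal{K}_{f,\textup{loc}}^p(G)$. Taking the supremum over admissible partitions then proves \eqref{e.eqs}. The principal obstacle is the calibration of the weighted Hölder step: the weight exponent $p$ must match the exponent in \eqref{e.useful}, while the dual exponent $p'$ must simultaneously make the chain-wise dual sum convergent — it is precisely this rigidity that dictates the form of conditions (2) and (3) in Proposition~\ref{p.admissible}. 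The remaining issues, namely reconciling the starred averages $a_R^*$ with the non-starred quantities appearing in the definition of $\mathcal{K}_{f,\textup{loc}}^p(G)$ and accommodating the prescribed $\lambda$ via a dyadic refinement of Whitney cubes, are tedious but conceptually routine.
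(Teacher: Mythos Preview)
Your proposal is correct and follows the same overall architecture as the paper: fix an admissible partition, apply a per-cube Whitney/chain estimate to each $Q\in\mathcal{P}(G)$, and then assemble the starred Whitney cubes into a single local partition of $G$. The difference lies in how the per-cube inequality
\[
|Q|\Big(\fint_Q |f-f_Q|\Big)^p \le C\sum_{R\in\mathcal{W}(Q)} |R^*|\Big(\fint_{R^*}|f-f_{R^*}|\Big)^p
\]
is established. The paper (Lemma~\ref{l.local-to-global}) carries out the chaining at the $L^1$ level: it bounds $\sum_W |W|\,|f_{W^*}-f_{W_0^*}|$ by $\sum_R \int_{R^*}|f-f_{R^*}|$ using only the elementary shadow-volume bound $\sum_{W\in\mathcal{S}(R)}|W|\le C|R|$ coming from \eqref{e.standard}, and then applies H\"older once at the end to raise to the $p$th power. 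You instead apply H\"older at the start to produce $p$th powers on individual Whitney cubes, and then control $\sum_W |W|\,|f_{W^*}-f_{W_0^*}|^p$ via a weighted H\"older along each chain together with the sharper shadow inequality \eqref{e.useful}. Your route is precisely the mechanism the paper deploys later for the term $\mathbf{F}_3$ in the proof of Theorem~\ref{t.weak_john}; it is heavier than needed here (condition~(3) of Proposition~\ref{p.admissible} is overkill when \eqref{e.standard} already suffices), but it is entirely valid and has the virtue of making the local-to-global theorem and the weak-type theorem share a single engine.
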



Let us
begin with a preliminary lemma, which is useful also in Section \ref{s.suff}.

\begin{lemma}\label{l.local-to-global}
  Let $H$ be a John domain in $\R^n$, $f\in
  L^1(H)$, and $1<p<\infty$. Then
\begin{align*}
  \left(\fint_H \lvert f(x)-f_{Q_0^*}\rvert \,dx\right)^p & + \left(\fint_H \lvert f(x)-f_H\rvert \,dx\right)^p \\
  &\leq \frac{C}{\lvert H\rvert}\sum_{Q\in\mathcal{W}(H)} \lvert Q^*\rvert \bigg(\fint_{Q^*}
  \lvert f(x)-f_{Q^*}\rvert\,dx\bigg)^p\,,
\end{align*}
where $Q_0$ is the fixed cube in the chain decomposition of $H$.
Moreover, a positive constant $C$ depends on $n$, $p$, and 
the John constant $\beta_H$. 
\end{lemma}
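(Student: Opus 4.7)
The strategy is to reduce both averages on the left to a single Whitney-cube sum, chain each local mean back to the base cube $Q_0^*$, and then run a weighted H\"older argument matched to the Carleson-type estimate in Proposition~\ref{p.admissible}(3). First, writing $f_H - f_{Q_0^*} = \fint_H (f - f_{Q_0^*})\, dx$ and applying the triangle inequality gives $\fint_H|f-f_H|\, dx \le 2\fint_H|f-f_{Q_0^*}|\, dx$, so it suffices to bound $(\fint_H|f-f_{Q_0^*}|\, dx)^p$. Since the Whitney cubes partition $H$ up to a null set, Jensen's inequality applied to the probability weights $|Q|/|H|$ gives
\[
\left(\fint_H|f-f_{Q_0^*}|\, dx\right)^p \le \frac{1}{|H|}\sum_{Q\in\mathcal{W}(H)}|Q|\left(\fint_Q|f-f_{Q_0^*}|\, dx\right)^p,
\]
so with $a_R := \fint_{R^*}|f-f_{R^*}|\, dx$ it suffices to dominate the right-hand side by $|H|^{-1}\sum_{Q}|Q^*|a_Q^p$.

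\emph{Chain telescoping.} Fix $Q\in\mathcal{W}(H)$ with chain $\mathcal{C}(Q)=(R_0,\ldots,R_L=Q)$. The triangle inequality yields
\[
\fint_Q|f-f_{Q_0^*}|\, dx \le \frac{|Q^*|}{|Q|}a_Q + \sum_{i=1}^L|f_{R_i^*}-f_{R_{i-1}^*}|,
\]
and the overlap estimate \eqref{p.2} lets me bound each $|f_{R_i^*}-f_{R_{i-1}^*}|$ by a constant multiple of $a_{R_i}+a_{R_{i-1}}$, by expressing the difference as an integral over $R_i^*\cap R_{i-1}^*$ and using that this overlap is comparable to both $|R_i^*|$ and $|R_{i-1}^*|$. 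Absorbing the $a_Q$ term into the chain, this yields $\fint_Q|f-f_{Q_0^*}|\, dx \le C\sum_{R\in\mathcal{C}(Q)}a_R$.

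\emph{H\"older and Carleson step.} Let $k_Q, j_R$ denote the dyadic scales of $Q$ and $R$, and set $\omega_R=(\tau+1+k_Q-j_R)^{-1}$. Property~(1) of Proposition~\ref{p.admissible} gives $j_R\le k_Q+\tau$, so $\omega_R\le 1$ is well defined; property~(2) combined with the convergence of $\sum_{m\ge 1}m^{-p'}$ makes $\sum_{R\in\mathcal{C}(Q)}\omega_R^{p'}$ bounded by a constant $C(p,\tau)$. H\"older's inequality with these weights therefore yields
\[
\left(\sum_{R\in\mathcal{C}(Q)}a_R\right)^p \le C\sum_{R\in\mathcal{C}(Q)}a_R^p(\tau+1+k_Q-j_R)^p.
\]
Multiplying by $|Q|$, summing over $Q$, and swapping the order of summation via $R\in\mathcal{C}(Q)\iff Q\in\mathcal{S}(R)$ reduces the estimate to $\sum_R a_R^p\sum_{Q\in\mathcal{S}(R)}|Q|(\tau+1+k_Q-j_R)^p$. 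Condition~(3) of Proposition~\ref{p.admissible} bounds the inner sum by $\sigma|R|\le\sigma|R^*|$, which closes the argument.

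The delicate point is the choice of H\"older weights: the exponent $p$ attached to $(\tau+1+k-j)$ in condition~(3) essentially forces $\omega_R^{-p}=(\tau+1+k_Q-j_R)^p$, and one then has to verify that the dual sum $\sum_R\omega_R^{p'}$ converges uniformly in $Q$; property~(2) (at most $2^\tau$ chain cubes per scale) together with $p'>1$ handles this. Once this alignment is identified, the Carleson-type inequality (3) does the rest of the analytic work, and the $f_H$ term is essentially free.
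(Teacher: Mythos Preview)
Your argument is correct, but it takes a different route from the paper. The paper stays at the $L^1$ level until the very end: it bounds $\int_H|f-f_{Q_0^*}|\,dx$ by $\sum_Q\int_{Q^*}|f-f_{Q^*}|\,dx+\sum_Q|Q|\,|f_{Q^*}-f_{Q_0^*}|$, telescopes the second sum along chains, and then uses only the shadow inclusion \eqref{e.standard} (i.e.\ $\sum_{Q\in\mathcal{S}(R)}|Q|\le C|R|$) to collapse it back to $\sum_R\int_{R^*}|f-f_{R^*}|\,dx$. A single global H\"older with weights $|Q|$ then produces the $p$-th power, and dividing by $|H|^p$ gives the claim. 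In particular, the paper never invokes conditions (1)--(3) of Proposition~\ref{p.admissible} here --- the cruder shadow bound suffices because no $p$-th powers appear until after the chaining. Your approach instead applies Jensen first, pushing the exponent $p$ inside the Whitney sum, and this forces the weighted H\"older and the full Carleson-type condition~(3). What you have written is essentially the machinery the paper deploys later in the proof of Theorem~\ref{t.weak_john} for the term $g_3$; it is heavier than needed for the lemma but perfectly valid, and it has the virtue of rehearsing exactly the estimate that will be required downstream.
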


\begin{proof}
  Observe that
\begin{align}\label{e.est}
\int_{H} \lvert f(x)-f_H\rvert\,dx
&\le 2 \int_H \lvert f(x)- f_{Q_0^*}\rvert\,dx \notag \\
&\le 2 \sum_{Q\in\mathcal{W}(H)} \int_{Q^*} \lvert f(x)-f_{Q^*}\rvert\,dx
+ 2\sum_{Q\in\mathcal{W}(H)}\lvert Q\rvert \lvert f_{Q^*} - f_{Q_0^*}\rvert\,.
\end{align}
Let us estimate the first term on the right-hand side in \eqref{e.est}.
By H\"older's inequality,
\begin{equation}\label{e.first}
\begin{split}
  \sum_{Q\in\mathcal{W}(H)} &\int_{Q^*} \lvert f(x)-f_{Q^*}\rvert\,dx \\
  & \leq C\left(\sum_{Q\in\mathcal{W}(H)} \lvert Q\rvert\right)^{1/p'}
  \left(\sum_{Q\in\mathcal{W}(H)} \lvert Q\rvert \left(\fint_{Q^*}
      \lvert
      f(x)-f_{Q^*}\rvert\,dx\right)^p \right)^{1/p}\\
  & \le C\lvert H\rvert^{1/p'}\left(\sum_{Q\in\mathcal{W}(H)} \lvert
    Q^*\rvert \left(\fint_{Q^*} \lvert f(x)-f_{Q^*}\rvert\,dx\right)^p
  \right)^{1/p},
\end{split}
\end{equation}
where $p'=p/(p-1)$ is the conjugate exponent to $p$.

To estimate the second term on the right-hand side in \eqref{e.est},
we use a chain $\mathcal{C}(Q)=(Q_0,\ldots,Q_k)$ joining the cube
$Q_0$ to $Q_k=Q\in\mathcal{W}(H)$. Hence,
\begin{equation} \label{e.sss}
\sum_{Q\in\mathcal{W}(H)}\lvert Q\rvert \lvert f_{Q^*} - f_{Q_0^*}\rvert
\le \sum_{Q\in\mathcal{W}(H)}\lvert Q\rvert
\sum_{i=1}^k  \lvert f_{Q^*_i} - f_{Q_{i-1}^*}\rvert.
\end{equation}
Here, by property \eqref{p.2}, for any $i\in \{1,\ldots,k\}$
\begin{align*}
  \lvert f_{Q^*_i} - f_{Q_{i-1}^*}\rvert & \leq  \fint_{Q_i^*\cap Q_{i-1}^*}|f-f_{Q_i^*}|\, dx +  \fint_{Q_i^*\cap Q_{i-1}^*}|f-f_{Q_{i-1}^*}|\, dx \\
  & \leq C\sum_{j=i-1}^i \fint_{Q_j^*} \lvert
  f(x)-f_{Q_j^*}\rvert\,dx\,.
\end{align*}
By the fact that there are no duplicates in $\mathcal{C}(Q)$, i.e., 
$Q_i\not=Q_j$ if $i\not=j$, we obtain
\begin{align*}
  \sum_{Q\in\mathcal{W}(H)}\lvert Q\rvert \lvert f_{Q^*} -
  f_{Q_0^*}\rvert & \le C\sum_{Q\in\mathcal{W}(H)}\lvert Q\rvert
  \sum_{i=1}^k  \sum_{j=i-1}^i \fint_{Q_j^*} \lvert f(x)-f_{Q_j^*}\rvert\,dx \\
  & \le C\sum_{Q\in\mathcal{W}(H)}\lvert Q\rvert
  \sum_{R\in\mathcal{C}(Q)} \fint_{R^*} \lvert f(x)-f_{R^*}\rvert\,dx \\
  &\le C\sum_{R\in\mathcal{W}(H)} \sum_{Q\in \mathcal{S}(R)} \lvert
  Q\rvert
  \fint_{R^*} \lvert f(x)-f_{R^*}\rvert\,dx \\
  & \leq C\sum_{R\in\mathcal{W}(H)}\int_{R^*}|f(x)-f_{R^*}|\, dx\,,
\end{align*}
where  the last
inequality is a consequence of inequality \eqref{e.standard}. We may estimate as in connection with \eqref{e.first}. This completes the proof.
\end{proof}

\begin{remark}\label{r.cubes}
  The following inequality, interesting as such, follows from 
Lemma~\ref{l.local-to-global}. Let $Q$ be a cube 
 and
  $f\in L^1(Q)$. Then, for every $1<p<\infty$,
\begin{equation*}
 \left(\fint_{Q}|f(x)-f_{Q}|\, dx\right)^p \leq 
 \frac{C}{\lvert Q\rvert}\sum_{R\in\mathcal{W}(Q)}|R^*|\left(\fint_{R^*}|f(x)-f_{R^*}|\, dx\right)^p,
\end{equation*}
where $\mathcal{W}(Q)$ refers to Whitney decomposition of the
interior of $Q$ and $C$ is a positive constant depending only on
$n$ and $p$.
\end{remark}

\begin{proof}[Proof of Theorem~\ref{t.local-to-global}]
  Let us  fix an admissible partition $\mathcal{P}(G)$
  of $G$ into cubes.  For each cube $Q\in
  \mathcal{P}(G)$ we form a local partition
 $\mathcal{P}_{\textup{loc}}(Q)=\{R^*:\
  R\in\mathcal{W}(Q)\}$. We write
\[
\mathcal{P}_{\textup{loc}}(G) = \bigcup_{Q\in\mathcal{P}(G)}
\mathcal{P}_{\textup{loc}}(Q).
\]
It is straightforward to verify that $\mathcal{P}_{\textup{loc}}(G)$
is a local partition of $G$. In particular, 
for each $R^*\in\mathcal{P}_{\textup{loc}}(Q)$ 
with $Q\in\mathcal{P}(G)$,
the inclusions $\lambda
R^*\subset Q\subset G$ are valid for $1<\lambda<\frac{10}{9}$. By
applying Remark~\ref{r.cubes} and observing that for each
$R^*\in\mathcal{P}_{\textup{loc}}(G)$  there is at most one cube
$Q\in\mathcal{P}(G)$ such that $R^*\in
\mathcal{P}_{\textup{loc}}(Q)$, we obtain
\begin{align*}
\sum_{Q\in \mathcal{P}(G) }
&\lvert Q\rvert \left(\fint_Q \lvert f(x)-f_Q\rvert\,dx\right)^p \\
& \le C\sum_{ Q\in \mathcal{P}(G) }
\sum_{R^*\in\mathcal{P}_{\textup{loc}}(Q)}
\lvert {R^*}\rvert \left(\fint_{R^*} \lvert f(x)-f_{R^*}\rvert\,dx\right)^p \\
& \le C\sum_{R^*\in\mathcal{P}_{\textup{loc}}(G)}
\lvert R^*\rvert \left(\fint_{R^*} \lvert f(x)-f_{R^*}\rvert\,dx\right)^p\le C
\mathcal{K}^p_{f,\textup{loc}}(G)\,.
\end{align*}  
The proof is completed by taking the supremum over all 
admissible partitions $\mathcal{P}(G)$.
\end{proof}

\begin{remark}
The construction of the Whitney decomposition that is described
in Section~\ref{s.notation} yields
   Theorem~\ref{t.local-to-global}
  for all $1<\lambda<\frac{10}{9}$. 
 A simple modification of the definition for dilated cubes $Q^*$ allows
 one to extend this range to every $1<\lambda < \frac{5}{4}$.
  It possible to use the general
  Whitney decomposition based on Stein~\cite[pp. 167--170]{Stein} in
  order to obtain the result for any $\lambda\geq
  \frac{5}{4}$.
\end{remark}

\section{A sufficient condition for a weak-type inequality}\label{s.suff}
\label{sect:weakIneq}

In this section, we show that cubes can be replaced
by  John domains in 
inequality \eqref{eq:weaktype}.

\begin{theorem}\label{t.weak_john}
  Suppose that $G$ is a John domain in $\R^n$. If $f\in L^1(G)$ and
  $1<p<\infty$, then the following weak-type inequality is valid
\begin{equation}\label{e.weak}
\sigma^p \lvert \{x\in G:\ \lvert f(x)-f_G\rvert >\sigma \}\rvert \le 
C\mathcal{K}_{f,\textup{loc}}^p(G)
\end{equation}
for all $\sigma>0$, where a positive constant $C$ depends on $n$, $p$,
$\lambda$, 
and the John constant $\beta_G$.
\end{theorem}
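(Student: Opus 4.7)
The plan is to split $|f(x)-f_G|$ along each Whitney cube into three pieces and estimate them independently. For $x\in G$ let $Q_x$ denote the Whitney cube containing $x$, and let $Q_0\in\mathcal{W}(G)$ be the root cube of the chain decomposition. The triangle inequality gives
\[
|f(x)-f_G|\le |f(x)-f_{Q_x^*}|+|f_{Q_x^*}-f_{Q_0^*}|+|f_{Q_0^*}-f_G|,
\]
so $\{|f-f_G|>3\sigma\}\subset A_\sigma\cup B_\sigma\cup C_\sigma$, where $A_\sigma,B_\sigma,C_\sigma$ collect the $x$'s for which, respectively, the first, second, and third summand exceeds $\sigma$. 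It suffices to bound each of $\sigma^p|A_\sigma|$, $\sigma^p|B_\sigma|$, $\sigma^p|C_\sigma|$ by $C\mathcal{K}^p_{f,\textup{loc}}(G)$ and then rescale.

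For the local piece $A_\sigma$ I apply the classical weak-type inequality \eqref{eq:weaktype} on each cube $Q^*$:
\[
\sigma^p|A_\sigma|\le\sum_{Q\in\mathcal{W}(G)}\sigma^p\bigl|\{x\in Q^*:|f-f_{Q^*}|>\sigma\}\bigr|\le C\sum_Q\mathcal{K}_f^p(Q^*).
\]
For each $Q$ I select a partition of $Q^*$ that is nearly extremal for $\mathcal{K}_f^p(Q^*)$, and concatenate these partitions over $Q\in\mathcal{W}(G)$. Because the family $\{Q^*\}$ has bounded overlap and each $Q^*$ lies at distance comparable to $\ell(Q)$ from $\partial G$ by \eqref{dist_est}, the concatenated family is a local partition of $G$ for a suitably chosen $\lambda>1$; this yields $\sum_Q\mathcal{K}_f^p(Q^*)\le C\mathcal{K}^p_{f,\textup{loc}}(G)$.

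The main obstacle is the chain piece $B_\sigma$, which uses the full force of Proposition~\ref{p.admissible}. By Chebyshev's inequality, $\sigma^p|B_\sigma|\le\sum_Q|Q||f_{Q^*}-f_{Q_0^*}|^p$. Telescoping along the chain $\mathcal{C}(Q)=(Q_0,\ldots,Q_k=Q)$ and invoking \eqref{p.2} as in the proof of Lemma~\ref{l.local-to-global} yields
\[
|f_{Q^*}-f_{Q_0^*}|\le C\sum_{R\in\mathcal{C}(Q)}a_R,\qquad a_R:=\fint_{R^*}|f-f_{R^*}|\,dx.
\]
Let $j_Q,j_R$ be the levels of $Q,R$. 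Condition (1) of Proposition~\ref{p.admissible} forces $j_R\le j_Q+\tau$, and condition (2) limits $\mathcal{C}(Q)$ to at most $2^\tau$ cubes per level. Applying H\"older's inequality with weight $(\tau+1+j_Q-j_R)^{-1}$ and conjugate exponent $p'$ gives
\[
|f_{Q^*}-f_{Q_0^*}|^p\le C\sum_{R\in\mathcal{C}(Q)}(\tau+1+j_Q-j_R)^p a_R^p,
\]
since the companion sum $\sum_{R\in\mathcal{C}(Q)}(\tau+1+j_Q-j_R)^{-p'}$ is bounded by $2^\tau\sum_{m\ge 1}m^{-p'}<\infty$. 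Reversing the order of summation over the pairs $\{(Q,R):R\in\mathcal{C}(Q)\}=\{(R,Q):Q\in\mathcal{S}(R)\}$ and invoking condition (3) of Proposition~\ref{p.admissible} collapses the double sum to
\[
\sum_Q|Q||f_{Q^*}-f_{Q_0^*}|^p\le C\sum_R|R|a_R^p\le C\mathcal{K}^p_{f,\textup{loc}}(G),
\]
the last bound using that $\{R^*:R\in\mathcal{W}(G)\}$ is itself a local partition of $G$.

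For the constant piece $C_\sigma$, the set is either empty or all of $G$, since $|f_{Q_0^*}-f_G|$ does not depend on $x$. In the nontrivial case, Lemma~\ref{l.local-to-global} with $H=G$ gives
\[
\sigma^p|G|\le|f_{Q_0^*}-f_G|^p|G|\le|G|\Bigl(\fint_G|f-f_{Q_0^*}|\,dx\Bigr)^p\le C\mathcal{K}^p_{f,\textup{loc}}(G).
\]
Combining the three estimates and replacing $\sigma$ by $\sigma/3$ yields \eqref{e.weak}.
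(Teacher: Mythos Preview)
Your proof is correct and follows essentially the same route as the paper: the same three-piece decomposition via the Whitney cube $Q_x$ and the root $Q_0$, the classical weak-type inequality \eqref{eq:weaktype} for the local piece, Lemma~\ref{l.local-to-global} for the constant piece, and the chaining argument with Proposition~\ref{p.admissible} (conditions (1)--(3)) together with H\"older for the middle piece. Your single H\"older step, using condition~(2) to bound the companion sum $\sum_{R\in\mathcal{C}(Q)}(\tau+1+j_Q-j_R)^{-p'}$ directly, is a mild streamlining of the paper's two-stage version (first over levels, then within each level), but the substance is identical.
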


\begin{proof}
  Recall that $Q_0$ is a fixed cube which is used to construct a chain
  decomposition of $G$, see
  Proposition \ref{p.admissible}.
By the triangle inequality for each $x\in G$,
\begin{align*}
  \lvert f(x)-f_{G}\rvert &\le \lvert f_{Q_0^*} - f_G\rvert +
  \left|f(x)-\sum_{Q\in\mathcal{W}(G)} f_{Q^*}\chi_{Q}(x)\right|+
  \left|\sum_{Q\in\mathcal{W}(G)} f_{Q^*} \chi_Q(x) - f_{Q_0^*}\right|
  \\ & =: g_1(x) + g_2(x) + g_3(x)\,.
\end{align*}
Hence, for a fixed $\sigma>0$, we have
\begin{align*}
  \sigma^p \lvert \{x\in G:\ \lvert
  f(x)-f_{G}\rvert>\sigma\}\rvert\le \sigma^p \mathbf{F}_1(\sigma) +
  \sigma^p \mathbf{F}_2(\sigma)+ \sigma^p \mathbf{F}_3(\sigma)\,
\end{align*}
where we have written
\[
\mathbf{F}_j(\sigma)=\left\lvert \{x\in G:\ g_j(x)>\sigma/3\}\right\lvert
\]
for $j\in \{1,2,3\}$. We shall next estimate these three terms. 

If $\lvert f_{Q_0^*} -f_G\rvert \le \sigma/3$, then
$\mathbf{F}_1(\sigma)=0$. Otherwise, by Lemma~\ref{l.local-to-global},
\begin{align*}
  \sigma^p \mathbf{F}_1(\sigma) & \le 3^p \lvert G\rvert \left(\fint_G \lvert
    f(x)-f_{Q_0^*}\rvert\,dx\right)^p \leq C\sum_{Q\in\mathcal{W}(G)}
  \mathcal{K}_f^p(Q^*) \leq C\mathcal{K}_{f,\textup{loc}}^p(G).
\end{align*}
Let us focus on the term $\sigma^p\mathbf{F}_2(\sigma)$. By
applying inequality \eqref{eq:weaktype},
\begin{align*}
  \sigma^p\mathbf{F}_2(\sigma) & = \sum_{Q\in\mathcal{W}(G)} \sigma^p \lvert
  \{x\in \textup{int}(Q)\,:\, g_2(x)>\sigma/3\}\rvert \\&\le \sum_{Q\in\mathcal{W}(G)}
  \sigma^p \lvert \{x\in Q^*:\ \lvert
  f(x)-f_{Q^*}\rvert>\sigma/3\}\rvert 
 \le C 3^p\sum_{Q\in\mathcal{W}(G)} \mathcal{K}_{f}^p(Q^*) \leq C\mathcal{K}_{f,\textup{loc}}^p(G).
\end{align*}
Let us estimate the remaining term $\sigma^p\mathbf{F}_3(\sigma)$ as follows
\begin{align*}
  \sigma^p \mathbf{F}_3(\sigma) & = \sigma^p\sum_{Q\in\mathcal{W}(G)} \lvert
  \{ x\in \textup{int}(Q)\,:\, \lvert f_{Q^*}-f_{Q_0^*}\rvert >\sigma/ 3\}\rvert \\
  &= \sum_{\substack {Q\in\mathcal{W}(G) \\ \lvert
      f_{Q^*}-f_{Q_0^*}\rvert>\sigma/3}}\sigma^p\lvert Q\rvert \le
  3^p \sum_{Q\in\mathcal{W}(G)} \lvert Q\rvert\lvert
  f_{Q^*}-f_{Q_0^*}\rvert^p.
\end{align*}
Estimating as in connection with \eqref{e.sss}, we end up having
\[
\lvert f_{Q^*}-f_{Q_0^*}\rvert^p \le C\left(\sum_{R\in\mathcal{C}(Q)} \fint_{R^*} \lvert 
f(x) - f_{R^*}\rvert\,dx\right)^p.  
\]
We use condition (1) of the chain $\mathcal{C}(Q)$ in
Proposition~\ref{p.admissible}. Then we write for $j\le k+\tau$
\[
1=(\tau+1+k-j)^{-1}(\tau+1+k-j),
\] 
apply H\"older's inequality, and finally use inequality 
\[\sup_{k\in\Z}
\sum_{j=-\infty}^{k+\tau}(\tau+1+k-j)^{-p'}<\infty\,,\] to conclude that
\begin{equation}\label{e.subs}
\begin{split}
  \sigma^p \mathbf{F}_3(\sigma) &\le C\sum_{k=-\infty}^\infty \sum_{Q\in
    \mathcal{W}_k(G)} \lvert Q\rvert\left(\sum_{j=-\infty}^{k+\tau}
    \sum_{\substack{R\in\mathcal{W}_j(G) \\ R\in \mathcal{C}(Q)}}
    \fint_{R^*} \lvert f(x)-f_{R^*}\rvert\,dx
  \right)^p \\
  & \le C\sum_{k=-\infty}^\infty \sum_{Q\in \mathcal{W}_k(G)} \lvert
  Q\rvert\sum_{j=-\infty}^{k+\tau} (\tau+1+k-j)^p
  \left(\sum_{\substack{R\in\mathcal{W}_j(G)\\ R\in \mathcal{C}(Q)}}
    \fint_{R^*} \lvert f(x)-f_{R^*}\rvert\,dx \right)^p.
\end{split}
\end{equation}
By condition (2) 
in
Proposition~\ref{p.admissible} and H\"older's inequality,
for any $Q\in\mathcal{W}(G)$ and 
$j\in \Z$,
\begin{equation} \label{e.subs2}
\begin{split}
  \sum_{\substack{R\in\mathcal{W}_j(G) \\ R\in \mathcal{C}(Q)}}
  \fint_{R^*} \lvert f(x)-f_{R^*}\rvert\,dx &\le
  \left(\sum_{\substack{R\in\mathcal{W}_j(G)\\ R\in \mathcal{C}(Q)}}
    1\right)^{1/p'}\left(\sum_{\substack{R\in\mathcal{W}_j(G)\\ R\in
        \mathcal{C}(Q)}}
    \left(\fint_{R^*} \lvert f-f_{R^*}\rvert\right)^p\right)^{1/p} \\
  &\le C\left(\sum_{\substack{R\in\mathcal{W}_j(G)\\ R\in
        \mathcal{C}(Q)}}\frac{\mathcal{K}_f^p(R^*)}{\lvert
      R^*\rvert}\right)^{1/p}.
\end{split}  
\end{equation}
If we substitute the estimate obtained in \eqref{e.subs2} to
\eqref{e.subs}, and observe that $R\in\mathcal{C}(Q)$ if and only if
$Q\in\mathcal{S}(R)$, we bound $\sigma^p \mathbf{F}_3(\sigma)$ as follows
\begin{align*}
  \sigma^p\mathbf{F}_3(\sigma) & \leq C\sum_{k=-\infty}^\infty \sum_{Q\in
    \mathcal{W}_k(G)} \lvert Q\rvert\sum_{j=-\infty}^{k+\tau} (\tau+1+k-j)^p
  \sum_{\substack{R\in\mathcal{W}_j(G) \\ R\in
      \mathcal{C}(Q)}}\frac{\mathcal{K}_f^p(R^*)}{\lvert
    R^*\rvert} \\
  & = C\sum_{j=-\infty}^\infty \sum_{R\in \mathcal{W}_j(G)}
  \frac{\mathcal{K}_f^p(R^*)}{\lvert R\rvert} \sum_{k=j-\tau}^\infty
  \sum_{\substack{Q\in\mathcal{W}_k(G) \\ Q\in \mathcal{S}(R)}}
  \lvert Q\rvert(\tau+1+k-j)^p \\
  & \leq C\sum_{j=-\infty}^\infty \sum_{R\in \mathcal{W}_j(G)}
  \mathcal{K}_f^p(R^*) \leq C\mathcal{K}_{f,\textup{loc}}^p(G),
\end{align*}
where we used  condition (3) in Proposition~\ref{p.admissible}. The claim follows.
\end{proof} 

We formulate the preceding theorem
for locally integrable functions; the proof is otherwise the same, but term $g_1$ is omitted
and we choose $c=f_{Q_0^*}$.

\begin{theorem}\label{t.weak_local_john}
Suppose that $G$ is a John domain in $\R^n$. If $f\in L^1_{\textup{loc}}(G)$ and
  $1<p<\infty$, then the following weak-type inequality is valid
\begin{equation}\label{e.weak_local}
\inf_{c\in\R} \sup _{\sigma>0} \sigma^p \lvert \{x\in G:\ \lvert f(x)-c\rvert >\sigma \}\rvert \le 
C\mathcal{K}_{f,\textup{loc}}^p(G)\,,
\end{equation}
 where a positive constant $C$ depends on $n$, $p$, $\lambda$, 
and the John constant $\beta_G$.
\end{theorem}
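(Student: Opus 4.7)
The plan is to follow the proof of Theorem~\ref{t.weak_john} with the explicit choice $c = f_{Q_0^*}$, where $Q_0\in\mathcal{W}(G)$ is the fixed Whitney cube used in the chain decomposition of $G$ from Proposition~\ref{p.admissible}. Because $f\in L^1_{\textup{loc}}(G)$ and each dilated Whitney cube $Q^*$ is a bounded subset of $G$, the averages $f_{Q^*}$ are well defined, and in particular so is $f_{Q_0^*}$. The point is that the term $g_1(x)=\lvert f_{Q_0^*}-f_G\rvert$ appearing in Theorem~\ref{t.weak_john}, which required $f_G$ to make sense and hence $f\in L^1(G)$, is now unnecessary: our target is the constant $f_{Q_0^*}$ itself, not $f_G$.

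Concretely, I would fix $\sigma>0$ and write, for every $x\in G$,
\[
\lvert f(x)-f_{Q_0^*}\rvert \le g_2(x)+g_3(x),
\]
with $g_2$ and $g_3$ exactly as in the proof of Theorem~\ref{t.weak_john}. Setting $\mathbf{F}_j(\sigma)=\lvert\{x\in G:g_j(x)>\sigma/2\}\rvert$ for $j\in\{2,3\}$, the union bound gives
\[
\sigma^p\lvert\{x\in G:\lvert f(x)-f_{Q_0^*}\rvert>\sigma\}\rvert \le \sigma^p\mathbf{F}_2(\sigma)+\sigma^p\mathbf{F}_3(\sigma).
\]
Both of these terms are then estimated exactly as in Theorem~\ref{t.weak_john}. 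For $\sigma^p\mathbf{F}_2(\sigma)$ one partitions $G=\bigcup_{Q\in\mathcal{W}(G)}\textup{int}(Q)$, applies the classical weak-type inequality \eqref{eq:weaktype} on each $Q^*$, and uses the bounded overlap of $\{Q^*\}_{Q\in\mathcal{W}(G)}$. For $\sigma^p\mathbf{F}_3(\sigma)$ one uses the chain $\mathcal{C}(Q)$ joining $Q_0$ to $Q$ as in the telescoping estimate preceding \eqref{e.sss}, then runs the H\"older/reindexing argument in \eqref{e.subs}--\eqref{e.subs2} and closes via properties (2) and (3) of Proposition~\ref{p.admissible}. Every integral appearing in these steps is over some $Q^*\subset G$, so local integrability of $f$ suffices throughout.

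Since the resulting bound is uniform in $\sigma>0$, taking the supremum over $\sigma$ and then the infimum over $c\in\R$ on the left-hand side (the specific $c=f_{Q_0^*}$ is admissible) yields \eqref{e.weak_local}. The only point that requires verification, and which I view as the sole (minor) obstacle, is to check line by line that the estimates of $g_2$ and $g_3$ in Theorem~\ref{t.weak_john} invoke $L^1$-integrability of $f$ only on bounded sets $Q^*\subset G$, so that replacing the hypothesis $f\in L^1(G)$ by $f\in L^1_{\textup{loc}}(G)$ and discarding $g_1$ indeed produces a valid proof.
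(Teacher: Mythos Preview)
Your proposal is correct and matches the paper's own proof essentially verbatim: the paper states that the proof of Theorem~\ref{t.weak_local_john} is the same as that of Theorem~\ref{t.weak_john} with the term $g_1$ omitted and the choice $c=f_{Q_0^*}$. Your use of $\sigma/2$ in place of $\sigma/3$ (since only two terms remain) is a harmless cosmetic change.
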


\section{Necessary conditions for a weak-type inequality}
\label{sect:Necessary}

We study necessary conditions for the validity of weak-type inequality
\eqref{e.weak_local} on domains.  In Theorem \ref{t.necessity}, a
necessary condition is formulated in terms of a Poincar\'e inequality.
Corollary \ref{c.nec} addresses the necessity of the John condition.

\begin{theorem}\label{t.necessity}
  Suppose that $n/(n-1)\le p <\infty$, and that $G$ is a bounded domain in
  $\R^n$, $n\ge 2$, for which the inequality
\begin{equation}\label{t.pi}
\inf_{c\in\R} \sup_{\sigma>0} \sigma^p \lvert \{ x\in G:\ \lvert f(x)- c\rvert >\sigma\} \rvert  
\le C\mathcal{K}_{f,\textup{loc}}^p(G)
\end{equation}
holds for all  $f\in L^1_{\textup{loc}}(G)$. Then $G$
satisfies the $(q^*,q)$-Poincar\'e inequality \eqref{e.poincare} with $p=q^*=nq/(n-q)$,
where $1\leq q <n$.
\end{theorem}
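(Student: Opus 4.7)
The plan is to proceed in three steps. First I bound $\mathcal{K}_{f,\textup{loc}}^p(G)$ from above by $\|\nabla f\|_{L^q(G)}^p$ via the classical Poincar\'e inequality on cubes. Then I combine this with the standing hypothesis \eqref{t.pi} to obtain a weak-type Sobolev-Poincar\'e inequality on $G$. Finally I upgrade this weak-type estimate to the strong $(q^*,q)$-Poincar\'e inequality by Maz'ya's truncation method.

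For the first two steps, let $f\in W^{1,q}(G)$ and fix a local partition $\mathcal{P}_{\textup{loc}}(G)$. For each cube $Q\in\mathcal{P}_{\textup{loc}}(G)$ the inclusion $\lambda Q\subset G$ together with the classical $(1,q)$-Poincar\'e inequality on $\lambda Q$ (which requires $q\ge 1$, equivalent to the standing hypothesis $p\ge n/(n-1)$) yields
\[
\fint_Q|f-f_Q|\,dx \le C\ell(Q)\Bigl(\fint_{\lambda Q}|\nabla f|^q\,dx\Bigr)^{1/q}.
\]
Raising to the $p$th power and multiplying by $|Q|$, the critical relation $p=q^*=nq/(n-q)$ forces the resulting power of $\ell(Q)$ to vanish exactly, so that
\[
|Q|\Bigl(\fint_Q|f-f_Q|\,dx\Bigr)^p \le C\Bigl(\int_{\lambda Q}|\nabla f|^q\,dx\Bigr)^{p/q}.
\]
Since $p/q=n/(n-q)\ge 1$, the subadditivity $\sum a_i^{p/q}\le(\sum a_i)^{p/q}$ for nonnegative $a_i$ together with the bounded overlap of the family $\{\lambda Q\}$ lets me sum over the partition to conclude $\mathcal{K}_{f,\textup{loc}}^p(G)\le C\|\nabla f\|_{L^q(G)}^p$. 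Inserting this into \eqref{t.pi} produces the weak-type Sobolev-Poincar\'e inequality
\[
\inf_{c\in\R}\sup_{\sigma>0}\sigma^p\bigl|\{x\in G:|f(x)-c|>\sigma\}\bigr|\le C\|\nabla f\|_{L^q(G)}^p.
\]

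The hard part will be step three, where the weak $L^{p,\infty}$ bound modulo constants must be converted into the strong $L^{q^*}$ bound required by the Poincar\'e inequality. I will apply Maz'ya truncation: subtract a median $m$ of $f$ and, after splitting into positive and negative parts, reduce to bounding $\|v\|_{L^{q^*}(G)}$ for $v:=(f-m)_+\ge 0$, whose support has measure at most $|G|/2$. A short case analysis, distinguishing $|c^*|\le\sigma/2$ and $|c^*|>\sigma/2$ and using $|\{v=0\}|\ge |G|/2$ in the latter case, shows that $\sup_\sigma\sigma^p|\{v>\sigma\}|\le 2^p\inf_c\sup_\sigma\sigma^p|\{|v-c|>\sigma\}|$, so the weak-type bound applied to the dyadic truncations $v_k:=\min(\max(v-2^k,0),2^k)$ delivers $2^{kp}|\{v>2^{k+1}\}|\le C\|\nabla v_k\|_{L^q}^p$. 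Since $\nabla v_k=\nabla v\cdot\chi_{\{2^k<v<2^{k+1}\}}$, the gradient contributions have essentially disjoint supports and $\sum_k\|\nabla v_k\|_{L^q}^q=\|\nabla v\|_{L^q}^q$. Summing the dyadic estimates via the layer-cake representation $\|v\|_{L^{q^*}(G)}^{q^*}\le C\sum_k 2^{kq^*}|\{v>2^k\}|$, and invoking $p/q\ge 1$ once more together with $\sum a_i^{p/q}\le(\sum a_i)^{p/q}$, converts this disjointness into $\|v\|_{L^{q^*}(G)}^{q^*}\le C\|\nabla v\|_{L^q(G)}^{q^*}$. The analogous bound for the negative part then completes the proof of the $(q^*,q)$-Poincar\'e inequality with $p=q^*$.
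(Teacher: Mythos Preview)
Your proof is correct and follows the same strategy as the paper: bound $\mathcal{K}_{f,\textup{loc}}^p(G)$ by $\lVert\nabla f\rVert_{L^q(G)}^p$ via the Poincar\'e inequality on cubes, combine with the hypothesis to get the weak-type Sobolev--Poincar\'e inequality, and then upgrade weak to strong by truncation---the paper simply cites \cite[Theorem~4]{H} for this last step, which is exactly Maz'ya's method that you spell out. One small slip: the definition of a local partition guarantees bounded overlap only of the cubes $Q$, not of the dilated cubes $\lambda Q$, so apply the $(1,q)$-Poincar\'e inequality on $Q$ itself rather than on $\lambda Q$ (or, as the paper does, use the $(q^*,q)$-Sobolev--Poincar\'e inequality on $Q$ directly); then the summation step goes through using the given overlap bound for the $Q$'s.
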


\begin{proof}
It is enough to verify that $G$ satisfies the {\em weak}
  $(q^*,q)$-Poincar\'e inequality. That is,  for all
  locally Lipschitz functions $f$ in $G$,
\begin{equation}\label{weak}
\inf_{c\in\R}\sup_{\sigma>0}  \sigma^{q^*}  \lvert \{ x\in G:\ \lvert f(x)- c\rvert >\sigma \} \rvert  \le 
  C\left(\int_{G} \lvert \nabla f(x)\rvert^q\,dx\right)^{q^*/q}\,.
\end{equation}
By applying inequality \eqref{weak} and the Maz'ya truncation
  method, we refer to \cite[Theorem 4]{H}, we may conclude that $G$
  satisfies the $(q^*,q)$-Poincar\'e inequality:
\begin{equation}\label{e.poincare}
\int_G \lvert f(x)-f_G\rvert^{q^*} \,dx \le C\left(\int_G \lvert
\nabla f(x)\rvert^q\,dx\right)^{q^*/q},
\end{equation}
where $f$ is in the Sobolev space $W^{1,q}(G)$.

Therefore, let us prove inequality \eqref{weak}. This will be
 a consequence of the $(q^*,q)$-Poincar\'e inequality on cubes in $G$. 
 Namely,
there is a local
partition $\mathcal{P}_{\textup{loc}}(G)$ such that
\begin{align*}
\inf_{c\in\R} \sup_{\sigma>0}  \sigma^{q^*} \lvert \{ x\in G:\ \lvert f(x)- c\rvert >\sigma\} \rvert  
  & \le C\sum_{Q\in\mathcal{P}_{\textup{loc}}(G)} \lvert Q\rvert \bigg(\fint_Q
  \lvert f(x)-f_Q\rvert\,dx\bigg)^{q^*} \\
  &\le C\sum_{Q\in\mathcal{P}_{\textup{loc}}(G)} \int_Q \lvert
  f(x)-f_Q\rvert^{q^*}\,dx \\
  & \le C\sum_{Q\in\mathcal{P}_{\textup{loc}}(G)} \left(\int_Q \lvert \nabla
  f(x)\rvert^q \,dx\right)^{q^*/q}.
\end{align*}
Since $q^*/q=n/(n-q)>1$, we obtain
the desired inequality \eqref{weak}.
\end{proof}

\begin{remark}\label{r.setup}
We may also conclude  the following
weak fractional Sobolev--Poincar\'e inequality. 
Suppose  that
inequality \eqref{t.pi} holds for all $f\in L^p_{\textup{loc}}(G)$ with
$n/(n-\delta)<p<\infty$ and $\delta\in (0,1)$.  Then the inequality
\begin{equation}\label{e.weakf}
\inf_{c\in\R}\sup_{\sigma>0} \sigma^{q^{*,\delta}} \lvert \{x\in G\,:\, \lvert f(x)-c\rvert>\sigma\}\rvert \le
C\bigg(\int_G\int_G \frac{\lvert f(x)-f(y)\rvert^q}{\lvert x-y\rvert^{n+\delta q}}\,dy\,dx\bigg)^{q^{*,\delta}/q}
\end{equation}
holds for all $f\in L^p_{\textup{loc}}(G)$,
where $p=q^{*,\delta}=nq/(n-\delta q)$ and $1<q<n/\delta$.
Indeed, by proceeding as in the proof of Theorem \ref{t.necessity}, we obtain a local partition 
$\mathcal{P}_{\textup{loc}}(G)$ such that
\begin{equation}\label{e.sec}
\inf_{c\in\R}\sup_{\sigma>0} \sigma^{q^{*,\delta}} \lvert \{x\in G\,:\, \lvert f(x)-c\rvert>\sigma\}\rvert
\le C\sum_{Q\in\mathcal{P}_{\textup{loc}}(G)} \int_Q \lvert
  f(x)-f_Q\rvert^{q^{*,\delta}}\,dx\,.
\end{equation}
The following fractional Sobolev--Poincar\'e inequality
\[
\int_Q \lvert
  f(x)-f_Q\rvert^{q^{*,\delta}}\,dx\le C\bigg(\int_Q\int_Q \frac{\lvert f(x)-f(y)\rvert^q}{\lvert x-y\rvert^{n+\delta q}}\,dy\,dx\bigg)^{q^{*,\delta}/q}\,,
  \]
  where $Q\in \mathcal{P}_{\textup{loc}}(G)$ and $C$ is a constant
  depending on $n$, $q$, and $\delta$, is a consequence of a
  translation and scaling argument in combination with \cite[Theorem
  4.10]{H-SV} applied in the John domain $(0,1)^n$.  In particular,
  the right hand side of \eqref{e.sec} is bounded by
\[
C\sum_{Q\in\mathcal{P}_{\textup{loc}}(G)}\bigg(\int_Q\int_Q \frac{\lvert f(x)-f(y)\rvert^q}{\lvert x-y\rvert^{n+\delta q}}\,dy\,dx\bigg)^{q^{*,\delta}/q}\,.
\]
Since $q^{*,\delta}/q> 1$ we can take the summation inside the parentheses
and the proof of the weak type inequality \eqref{e.weakf} is finished 
by recalling that $\sum_{Q\in\mathcal{P}_{\textup{loc}}(G)} \chi_Q\le N\chi_G$. \color{black}
\end{remark}

 We recall from
\cite[Definition 3.2]{BK} that a domain $G$ with a fixed point $x_0$
satisfies a separation property if there exists a constant $C_0$ such
that for each $x\in G$ there is a curve $\gamma$ joining $x$ and $x_0$
in $G$ so that for each $t$ either
\[\gamma([0,t])\subset
B:=B(\gamma(t),C_0\dist(\gamma(t),\R^n\setminus G))\] or each
$y\in\gamma([0,t])\setminus \overline{B}$ belongs to a different
component of $G\setminus\partial B$ than $x_0$.  As an example, for
simply connected planar domains, the separation property is
automatically valid.

The following corollary is a consequence of Theorem~\ref{t.weak_local_john},
Theorem~\ref{t.necessity}, and \cite[Theorem 1.1]{BK}.

\begin{corollary}\label{c.nec}
  Suppose that $G$ is a bounded domain in $\R^n$, $n\ge 2$, satisfying a
  separation property. Assume further that $n/(n-1)\leq
  p<\infty$. Then the weak-type inequality
\begin{equation}
\inf_{c\in\R}\sup_{\sigma>0} \sigma^p \lvert \{ x\in G:\ \lvert f(x)- c\rvert >\sigma\} \rvert  
\le C\mathcal{K}_{f,\textup{loc}}^p(G)
\end{equation}
holds for every $f\in L^1_{\textup{loc}}(G)$ if, and
only if, $G$ is a John domain.
\end{corollary}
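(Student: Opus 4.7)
The plan is to prove both directions of the equivalence by direct assembly of results already established in this paper together with one external input, \cite[Theorem 1.1]{BK}. The separation property enters only in the necessity direction.

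For the sufficiency direction (John domain implies the weak-type inequality), there is essentially nothing new to do: I would simply quote Theorem~\ref{t.weak_local_john} applied to any $f\in L^{1}_{\textup{loc}}(G)$ with the choice $c=f_{Q_{0}^{*}}$. That theorem delivers exactly the advertised inequality, together with the claimed dependence of the constant on $n$, $p$, $\lambda$, and the John constant $\beta_{G}$.

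For the necessity direction I would proceed in three short steps. First, given $p$ with $n/(n-1)\le p<\infty$, set $q=np/(n+p)$; the hypothesis $p\ge n/(n-1)$ is precisely what forces $q\ge 1$, and one clearly has $q<n$ and $q^{*}=nq/(n-q)=p$. Second, apply Theorem~\ref{t.necessity} with this $q$ to conclude that $G$ supports the $(q^{*},q)$-Poincar\'e inequality \eqref{e.poincare}. Third, invoke \cite[Theorem 1.1]{BK}, which characterizes John domains among bounded domains with the separation property in terms of exactly this kind of Poincar\'e inequality, to conclude that $G$ is a John domain.

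The only real obstacle is the bookkeeping in the first step: one has to verify that the pair $(q,q^{*})=(np/(n+p),p)$ lies in the admissible range of \cite[Theorem 1.1]{BK}, which reduces to checking that $p\ge n/(n-1)$ implies $q\ge 1$. This is an elementary computation, and everything else in the argument is direct citation.
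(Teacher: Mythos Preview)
Your proposal is correct and matches the paper's approach exactly: the paper states that the corollary is a consequence of Theorem~\ref{t.weak_local_john}, Theorem~\ref{t.necessity}, and \cite[Theorem~1.1]{BK}, and you have simply supplied the straightforward bookkeeping (in particular the choice $q=np/(n+p)$ and the verification that $p\ge n/(n-1)$ forces $q\ge 1$) needed to chain these results together.
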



\end{document}